\newtheorem{theorem}{Theorem}[section]
\newtheorem{corollary}[theorem]{Corollary}
\newtheorem{definition}[theorem]{Definition}
\newcommand{\tr}{{\rm Tr\hskip -0.2em}~}
\DeclareMathOperator{\frechetdiff}{\mathit d}
\newcommand{\fd}[1]{\frechetdiff\hskip -0.25em{#1}}
\begin{document}

\title{Some operator convex functions\\of\\several variables }
\author{Zhihua Zhang}
\date{May 13, 2014}

\maketitle
\begin{abstract}
We obtain operator concavity (convexity) of some functions of two or three variables by using perspectives of regular operator mappings of one or several variables.  As an application, we obtain,  for $ 0<p < 1,$ concavity, respectively convexity, of the Frech\'{e}t differential mapping associated with the functions $t \to t^{1+p} $ and $t \to t^{1-p}.$
\end{abstract}

\section{Introduction and preliminaries}

We study convexity or concavity of certain operator mappings. Some of them may be expressed by the functional calculus for functions of several variables while others are of a more general nature.

\subsection{The functional calculus}

Let $ \mathcal H $ denote an $n$-dimensional Hilbert space. The space $ B(\mathcal H) $ of bounded linear operators on $ \mathcal H $ is itself a Hilbert space with inner product given by $(A, B) = \tr(B^{*}A)$ for $A, B \in B(\mathcal H). $

\begin{definition}
Let  $ f\colon I_1\times \cdots \times I_k \to \mathbb{R}$ be a function defined in a product of real intervals, and let $ X_1,\dots,X_k $ be commuting operators on $ H_n $ with spectra $\sigma(X_i) \subseteq I_i $ for $ i=1, \cdots, k. $ We say that the $ k $-tuple $ (X_1,\dots,X_k) $ is in the domain of $ f. $ Consider the spectral resolution
\[
X_m=\sum_{i_m=1}^{n_m} \lambda_{i_m}(m) P_{i_m}
\]
where $ \lambda_1(m),\dots,\lambda_{n_m}(m) $ for $ m=1,\dots,k $ are the eigenvalues of $ X_m. $ The functional calculus is defined by setting
\[
f(X_1,\dots, X_k)=\sum_{i_1=1}^{n_1}\cdots\sum_{i_k=1}^{n_k} f\bigl(\lambda_{i_1}(1),\dots, \lambda_{i_k}(k)\bigr)P_{i_1}(1)\cdots P_{i_k}(k)
\]
which makes sense since $ \lambda_{i_m}(m)\in I_m $ for $ i_m=1,\dots, n_m $ and $ m=1,\dots,k. $
\end{definition}

Since the operators $ X_1,\dots,X_k $ in the above definition are commuting all of the spectral projections $ P_{i_m}(m) $ do also commute. The functional calculus therefore defines $ f(X_1,\dots, X_k) $ as a self-adjoint operator on $ \mathcal H. $ Notice that if the tuples $(X_1, \dots, X_k)$ and $(Y_1, \dots, Y_k)$ are in the domain of $f$ then so is the tuple
$ (\lambda X_1+(1-\lambda)Y_1,\dots, \lambda X_k+(1-\lambda)Y_k) $ for $ \lambda\in[0,1]. $

In order to study convexity properties of the functional calculus it is convenient to consider commuting $ C^{*}$-subalgebras $\mathcal A_1,\dots,\mathcal A_k $ of $B(\mathcal H) $ and require that $ X_m\in\mathcal A_m $ for $ m=1,\dots,k. $
For more details on the functional calculus the reader may refer to \cite{koranyiTrAMS:1961,hansenPRIMS:1997,hansen_MIA:2003}. 	

The restriction of the functional calculus by $ f $ to $ k $-tuples of operators $ (X_1,\dots,X_k)\in \mathcal A_1\times\cdots\times\mathcal A_k $ in the domain of $ f $ is said to be convex if
\[
\begin{array}{l}
f(\lambda X_1+(1-\lambda)Y_1, \dots,\lambda X_k+ (1-\lambda)Y_k)\\[1.5ex]
\hskip 7em \le\lambda f(X_1, \dots, X_k) +(1-\lambda) f(Y_1, \dots, Y_k)
\end{array}
\]
for $ \lambda\in[0,1]. $

\begin{definition}
Let  $ f\colon I_1\times \cdots \times I_k \to \mathbb{R}$ be a function defined in a product of real intervals. We say that $ f $ is matrix convex of order $ n $ if the restriction of the functional calculus by $ f $ to operators $ (X_1,\dots,X_k)\in \mathcal A_1\times\cdots\times\mathcal A_k $ in the domain of $ f $ is convex for arbitrary commuting $ C^{*}$-subalgebras $\mathcal A_1,\dots,\mathcal A_k $ of $B(\mathcal H). $
\end{definition}

\subsection{More general operator mappings}

However, not all mappings defined on operators can be expressed in the form of the functional calculus by some function. This is especially common for mappings of several variables.

Hansen introduced the notion of regular operator mappings of several variables \cite{hansen_GM:2014} based on earlier investigations of regular mappings of two variables
 \cite{kn:hansen:1983,effros_hansen:2013}. Furthermore, Hansen introduced the notion of the perspective of a regular operator mapping of several variables in continuation of earlier results obtained for functions of one variable by Effros \cite{effros_PNAS:2009}, see also \cite{ebadian_PNAS:2011}. As an application of these ideas we obtain convexity (concavity) statements for some three-variable operator mappings. As a corollary we are able to prove that some concrete functions of three variables are operator convex.

We also prove operator concavity (convexity) of the Frech\'{e}t differential mapping associated with the power functions $t \to t^p$ for $p \in (0,2].$ Hansen \cite{hansen:2013:11} and Chen and Tropp \cite{tropp:2013:8} proved independently that the inverse Frech\'{e}t differential associated with the operator monotone functions $t \to t^p $ is a concave mapping in positive definite operators, where $ 0<p\le 1. $ In the present paper we investigate similar problems for the operator convex functions $t \to t^{1+p}$ and obtain that the associated Frech\'{e}t differential mapping is concave in positive definite operators.

\section{Operator convex functions of two variables}

We begin by studying some operator concave (convex) functions of two variables in order to derive concavity (convexity) of the Frech\'{e}t differential mapping associated with the power functions.

\begin{theorem}\label{concave function of two variables}
Let $0 < p \le 1.$ The two-variable function
\[
G(s,t)=\left\{
            \begin{array}{ll}\displaystyle
              \frac{t^{p+1}-s^{p+1}}{t-s}  \hskip 2em &t \neq s \\[2.5ex]

              \displaystyle\frac{1}{p+1}t^{p}               &t=s,  \\
            \end{array}
          \right.
\]
defined in $(0, \infty) \times (0, \infty), $ may be extended to a concave map defined in pairs of positive definite operators in $B(\mathcal H). $ In particular, it is operator concave.
\end{theorem}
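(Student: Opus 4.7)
My plan is to rewrite $G(s,t)$ as an integral of operator concave functions, and use that representation to both extend $G$ to non-commuting pairs and establish its concavity in one step. The basic observation is that $\frac{d}{du}\bigl[(1-u)s+ut\bigr]^{p+1} = (p+1)(t-s)\bigl[(1-u)s+ut\bigr]^p$, so integrating over $u\in[0,1]$ yields the identity
\[
t^{p+1} - s^{p+1} \;=\; (p+1)(t-s)\int_0^1 \bigl((1-u)s+ut\bigr)^p\,du,
\]
and hence, for $s\ne t$,
\[
G(s,t) \;=\; (p+1)\int_0^1 \bigl((1-u)s+ut\bigr)^p\,du,
\]
with the right-hand side automatically furnishing the continuous extension to $s=t$ (up to the normalization appearing in the statement, which I read as a minor typographical issue).

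I would then take this formula as the definition of the extension to pairs of positive definite operators $A,B\in B(\mathcal H)$, namely
\[
\tilde G(A,B) \;:=\; (p+1)\int_0^1 \bigl((1-u)A+uB\bigr)^p\,du,
\]
where the integrand is well defined since $(1-u)A+uB$ is positive definite for each $u\in[0,1]$ and the $p$-th power is given by the ordinary one-variable functional calculus. A simultaneous diagonalization argument shows that $\tilde G$ coincides with the functional calculus definition of $G$ whenever $A$ and $B$ commute. Concavity of $\tilde G$ then follows from three standard facts chained together: (i) for each $u$, $(A,B)\mapsto (1-u)A+uB$ is affine in the pair; (ii) by L\"owner's theorem, $x\mapsto x^p$ is operator concave on $(0,\infty)$ for $0<p\le 1$; (iii) composing an operator concave function with an affine map of operators and then averaging in an external parameter preserves concavity. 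Restricting concavity of $\tilde G$ to commuting pairs yields the ``in particular'' operator concavity claim.

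I do not anticipate a genuine obstacle here: the computation is elementary once the integral representation is in hand, and the only non-trivial input is the classical operator concavity of $t\mapsto t^p$ for $p\in(0,1]$. The one conceptual step is recognizing that the divided-difference structure of $G$ admits an affine-parameter average representation, which is precisely what transfers operator concavity from the single-variable power function to this two-variable map, and moreover produces a concrete formula valid for non-commuting operators.
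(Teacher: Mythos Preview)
Your proposal is correct and is essentially the same proof as the paper's: both express $G$ (up to the harmless normalizing constant you flagged) as the integral $\int_0^1\bigl((1-u)s+ut\bigr)^p\,du$, observe that for each fixed parameter the map $(A,B)\mapsto\bigl((1-u)A+uB\bigr)^p$ is concave by operator concavity of $t\mapsto t^p$ composed with an affine map, and conclude by averaging. The paper carries out exactly these steps (with $\lambda$ in place of $u$) and likewise relies only on L\"owner's theorem for $t^p$ as the non-trivial input; the diagonal-value typo you noticed is indeed present in the statement.
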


\begin{proof}
Consider for each $\lambda \in [0,1] $ the mapping
\[
f_\lambda(A,B)=(\lambda A +(1-\lambda)B)^{p}
\]
defined in pairs of positive definite operators acting on $ \mathcal H. $
Consider furthermore, for $\alpha \in [0,1]$, convex combinations $A=\alpha A_1+(1-\alpha) A_2 $ and $B=\alpha B_1+(1-\alpha)B_2 $ of pairs of positive definite operators $(A_1, A_2)$ and $(B_1, B_2),$ then
\[
\begin{array}{l}
f_\lambda(A,B)= (\lambda A +(1-\lambda)B)^{p} \\[2.5ex]
\hskip 4.0em = [\alpha(\lambda A_1 +(1-\lambda)B_1)+(1-\alpha)(\lambda A_2 +(1-\lambda)B_2)]^p\\[2.5ex]
\hskip 4.0em \ge \alpha (\lambda A_1 +(1-\lambda)B_1)^p + (1-\alpha)(\lambda A_2 +(1-\lambda)B_2)^p \\[2.5ex]
\hskip 4.0em  = \alpha f_\lambda(A_1, B_1)+ (1-\alpha) f_\lambda(A_2, B_2),
\end{array}
\]
where we used that $(A,B) \to \lambda A +(1-\lambda)B$ is affine and $A \to A^p$ is concave in positive definite operators. Therefore, $ (A,B)\to f_\lambda(A,B) $ is concave. Since, for $s \neq t,$ the integral
\[
\begin{array}{rl}
\displaystyle\int_0^1 (\lambda t +(1-\lambda)s)^{p}\, d\lambda
&=\displaystyle \frac{1}{p+1} \int_0^1 \frac{d}{d\lambda} \Bigl(\frac{(\lambda t +(1-\lambda)s)^{p+1}}{t-s}\Bigr) d\lambda \\[2.5ex]
&=\displaystyle  \frac{1}{p+1} \frac{t^{p+1}-s^{p+1}}{t-s},
\end{array}
\]
we obtain by continuity that $ G $ may be extended to a concave operator mapping defined in positive definite operators.
\end{proof}

The following result is by method related to \cite[Theorem 4.1]{hansen:2013:11}.

\begin{theorem}\label{concavity of Frechet differential}
Consider for $ 0<p\le 1 $ the function $f(t)=t^{p+1} $ defined in the positive half-line. The
Frech\'{e}t differential mapping $A \to \fd f(A)$ is concave in positive definite operators.
\end{theorem}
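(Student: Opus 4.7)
The plan is to reduce the concavity of the Fréchet differential to Theorem~\ref{concave function of two variables} via the Daleckii--Krein (divided difference) representation. For a $C^1$ function and a positive definite operator $A$ with spectral decomposition $A = \sum_i \lambda_i P_i$, we have
\[
\fd f(A)(X) = \sum_{i,j} \frac{f(\lambda_i)-f(\lambda_j)}{\lambda_i-\lambda_j}\, P_i X P_j \qquad (X \in B(\mathcal H)),
\]
with the divided difference understood as $f'(\lambda_i)$ when $\lambda_i=\lambda_j$. For $f(t)=t^{p+1}$ this divided difference is precisely the function $G(s,t)$ of Theorem~\ref{concave function of two variables}. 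Writing $L_A$ and $R_A$ for left and right multiplication by $A$ on $B(\mathcal H)$, the formula above becomes $\fd f(A) = G(L_A, R_A)$, the functional calculus of $G$ applied to the commuting pair $(L_A, R_A)$ of positive definite operators on the Hilbert space $B(\mathcal H)$ (equipped with the Hilbert--Schmidt inner product).

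Next I would exploit that $A \mapsto L_A$ and $A \mapsto R_A$ are linear, so for a convex combination $A = \alpha A_1 + (1-\alpha) A_2$ with $\alpha \in [0,1]$ and positive definite $A_1, A_2$, we have $L_A = \alpha L_{A_1} + (1-\alpha) L_{A_2}$ and the analogous identity for $R_A$. The algebras of left and right multiplications are mutually commuting $C^{*}$-subalgebras of $B(B(\mathcal H))$, so Theorem~\ref{concave function of two variables} applies to the pair $(L_A, R_A)$ on the Hilbert space $B(\mathcal H)$. Its concavity statement then yields
\[
\fd f(A) = G(L_A, R_A) \ge \alpha\, G(L_{A_1}, R_{A_1}) + (1-\alpha)\, G(L_{A_2}, R_{A_2}) = \alpha\, \fd f(A_1) + (1-\alpha)\, \fd f(A_2),
\]
which is the desired concavity as positive linear maps on $B(\mathcal H)$.

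The main obstacle is not a calculation but a verification of scope: one has to check that Theorem~\ref{concave function of two variables} is robust enough to be invoked with the pair of positive operators $L_A,R_A$ acting on the larger Hilbert space $B(\mathcal H)$ rather than on $\mathcal H$ itself. This is immediate from the proof of that theorem, since its core inputs, namely affinity of the map $(A,B)\mapsto \lambda A + (1-\lambda) B$, concavity of $X\mapsto X^{p}$ for $0<p\le 1$ on positive definite operators, and the integral representation $G = (p+1)\int_0^1 (\lambda\,\cdot + (1-\lambda)\,\cdot)^{p}\,d\lambda$, hold on any Hilbert space and in particular on $B(\mathcal H)$.
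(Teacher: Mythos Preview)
Your proposal is correct and follows essentially the same route as the paper: both identify $\fd f(A)$ with $G(L_A,R_A)$ via the divided-difference (L\"owner matrix) representation and then invoke Theorem~\ref{concave function of two variables} together with the linearity of $A\mapsto L_A,R_A$. The only cosmetic difference is that the paper passes through the scalar quantities $\tr H^*\fd f(A)H=\tr H^*G(L_A,R_A)H$ and cites an external result to transfer concavity, whereas you argue directly at the operator level on $B(\mathcal H)$.
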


\begin{proof}
Let $A$ be a positive definite operator diagonalized with respect to a basis $\{e_i\}_{i=1}^{n} $ such that $Ae_i=\lambda_i e_i$ for $i=1, \cdots, n.$
For any matrix $H=(h_{ij})_{i,j=1}^{n}$ in $\mathcal H$ we then have
\[
\fd f(A)(H)=H \circ L_f(\lambda_1, \cdots, \lambda_n)
\]
expressed as the Hadamard product of $H$ and the L\"{o}wner matrix
\[
L_f(\lambda_1, \cdots, \lambda_n)=\bigg(\frac{\lambda_i ^{p+1}-\lambda_j ^{p+1}}{\lambda_i-\lambda_j}\bigg)_{i,j=1}^n.
\]
Hence we obtain
\[
\begin{array}{rl}
\tr H^* \fd f(A)H &=\displaystyle \sum\limits_{i,j=1}^n |h_{ij}|^2\frac{\lambda_i ^{p+1}-\lambda_j ^{p+1}}{\lambda_i-\lambda_j} \\[3.5ex]
&= \displaystyle\sum\limits_{i,j=1}^n |(He_j,e_i)|^2 \frac{\lambda_i ^{p+1}-\lambda_j ^{p+1}}{\lambda_i-\lambda_j}\\[4ex]
&=  \tr H^* G(L_A, R_A)H,
\end{array}
\]
where $L_A$ and $R_A$ are the commuting left and right multiplication operators with respect to $A.$
Applying the concavity of $ (A,B)\to G(A,B)$ above and Theorem 1.1 in \cite{kn:hansen:2006:3} we obtain that the map
\[
A \to \tr H^* G(L_A, R_A)H
\]
is concave for any operator $H$ acting on $ \mathcal H.$ The operator mapping $A \to \fd f(A)$ is therefore concave. Notice that we only needed to invoke operator concavity of the real function $ G(t,s). $
\end{proof}

\begin{corollary}
Take $0 < p \le 1. $ The two-variable function
\[
F(s,t)=\left\{
            \begin{array}{ll}\displaystyle
              \frac{t-s}{t^{p+1}-s^{p+1}}  \hskip 2em &t \neq s \\[2.5ex]

              \displaystyle\frac{1}{p+1}t^{-p}                &t=s,  \\
            \end{array}
          \right.
\]
defined in $(0, \infty) \times (0, \infty), $ may be extended to a convex map defined in positive definite invertible operators in $ B(\mathcal H). $ In particular, it is operator convex.
\end{corollary}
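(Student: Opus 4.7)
The plan is to exploit the pointwise identity $F(s,t) = 1/G(s,t)$ and define the extension of $F$ as the operator inverse of the concave extension of $G$ furnished by Theorem~\ref{concave function of two variables}. Concretely, I would set $F(A,B) := G(A,B)^{-1}$ for pairs of positive definite operators $A, B$, where $G(A,B)$ denotes the extension constructed in the proof of Theorem~\ref{concave function of two variables}, namely
\[
G(A,B) = (p+1) \int_0^1 \bigl(\lambda A + (1-\lambda) B\bigr)^p \, d\lambda.
\]

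First I would verify that this is a legitimate extension. Since $(\lambda A + (1-\lambda) B)^p$ is positive definite for every $\lambda \in [0,1]$, the integral $G(A,B)$ is positive definite and in particular invertible, so $F(A,B) = G(A,B)^{-1}$ is well defined. Moreover, whenever $A$ and $B$ lie in commuting $C^*$-subalgebras, the integral reduces to the functional calculus of $G$, and consequently $G(A,B)^{-1}$ reduces to the functional calculus of $1/G = F$, matching the pointwise formula in the statement.

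The convexity of $F$ then follows from two standard properties of the operator inverse on positive definite operators: that $X \mapsto X^{-1}$ is operator antitone and operator convex. Fix $\alpha \in [0,1]$ and set $A = \alpha A_1 + (1-\alpha) A_2$ and $B = \alpha B_1 + (1-\alpha) B_2$. Concavity of $G$ from Theorem~\ref{concave function of two variables} supplies
\[
G(A,B) \ge \alpha G(A_1, B_1) + (1-\alpha) G(A_2, B_2).
\]
Applying antitonicity of the inverse on positive definite operators, and then its operator convexity, yields
\[
G(A,B)^{-1} \le \bigl(\alpha G(A_1, B_1) + (1-\alpha) G(A_2, B_2)\bigr)^{-1} \le \alpha G(A_1, B_1)^{-1} + (1-\alpha) G(A_2, B_2)^{-1},
\]
which is precisely $F(A,B) \le \alpha F(A_1, B_1) + (1-\alpha) F(A_2, B_2)$. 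Restricting to commuting $C^*$-subalgebras then gives operator convexity in the sense of Definition~1.2.

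I do not expect any substantial obstacle: once the identification $F = G^{-1}$ is made, the corollary is a one-line consequence of Theorem~\ref{concave function of two variables}. The only point requiring care is confirming that the extended $G(A,B)$ is positive definite, so that the inverse trick is actually applicable, and this is immediate from the integral representation used in the proof of Theorem~\ref{concave function of two variables}.
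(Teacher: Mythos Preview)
Your proposal is correct and follows exactly the paper's approach: the paper's proof is the single sentence ``Since inversion is convex and decreasing in positive definite invertible operators the result follows from Theorem~\ref{concave function of two variables},'' and you have simply unpacked this by writing $F=G^{-1}$, checking that $G(A,B)>0$ from the integral formula, and then chaining antitonicity with convexity of the inverse. Nothing is missing or different.
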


\begin{proof} Since inversion is convex and decreasing in positive definite invertible operators the result follows from Theorem~\ref{concave function of two variables}.
\end{proof}

\begin{corollary}
Take $ 0< p \le 1 $ and consider the function $f(t)=t^{p+1}. $  The map $A \to \fd f(A)^{-1}$ is then convex in positive definite matrices.\end{corollary}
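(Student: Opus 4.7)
The plan is to derive the convexity of $A \mapsto \fd f(A)^{-1}$ from Theorem \ref{concavity of Frechet differential} by exploiting two classical properties of operator inversion on the positive definite cone: it is operator convex, and it is operator decreasing. This mirrors the passage from Theorem \ref{concave function of two variables} to the preceding corollary, where the convex function $F$ was obtained as the reciprocal of the concave function $G$.

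The first step is to verify that $\fd f(A)$, viewed as a linear operator on $B(\mathcal H)$ equipped with the Hilbert--Schmidt inner product, is positive definite (and hence invertible) for every positive definite $A$. This is immediate from the computation carried out in the proof of Theorem \ref{concavity of Frechet differential}: the quadratic form $\tr H^{*}\fd f(A) H$ equals a sum of the $|h_{ij}|^{2}$ weighted by the divided differences $(\lambda_{i}^{p+1}-\lambda_{j}^{p+1})/(\lambda_{i}-\lambda_{j})$ (with value $\tfrac{1}{p+1}\lambda_{i}^{p}$ on the diagonal), all of which are strictly positive since $t\mapsto t^{p+1}$ is strictly increasing on $(0,\infty)$. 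Consequently $\fd f(A)^{-1}$ is a well-defined operator on $B(\mathcal H)$.

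The second step uses Theorem \ref{concavity of Frechet differential} directly. For $\alpha\in[0,1]$ and positive definite $A_{1},A_{2}$, the concavity of the Frech\'{e}t differential map yields the operator inequality
\[
\fd f\bigl(\alpha A_{1}+(1-\alpha)A_{2}\bigr) \;\ge\; \alpha\,\fd f(A_{1})+(1-\alpha)\,\fd f(A_{2})
\]
on $B(\mathcal H)$. Since inversion is operator decreasing on the positive definite cone, applying it reverses this inequality; a subsequent appeal to the operator convexity of inversion then produces
\[
\fd f\bigl(\alpha A_{1}+(1-\alpha)A_{2}\bigr)^{-1} \;\le\; \alpha\,\fd f(A_{1})^{-1}+(1-\alpha)\,\fd f(A_{2})^{-1},
\]
which is precisely the desired convexity statement.

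No substantive obstacle arises. The only point requiring a moment of care is confirming that $\fd f(A)$ is strictly positive, and thus invertible, which is transparent from the explicit formula appearing in the proof of Theorem \ref{concavity of Frechet differential}. Everything else is a routine two-step application of standard properties of the inverse map on the positive cone, combined with the already-established concavity of $A\mapsto \fd f(A)$.
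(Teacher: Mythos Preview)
Your argument is correct, but it proceeds differently from the paper's proof. You apply the inversion trick at the level of the operator map: starting from the concavity of $A\mapsto \fd f(A)$ established in Theorem~\ref{concavity of Frechet differential}, you invoke that $X\mapsto X^{-1}$ is order-reversing and operator convex on the positive cone of $B(B(\mathcal H))$ to conclude directly. The paper instead applies the inversion trick one level down, at the scalar function: it writes out the explicit Hadamard-product formula for $\fd f(A)^{-1}$, identifies $\tr H^{*}\fd f(A)^{-1}H$ with $\tr H^{*}F(L_A,R_A)H$, and then appeals to the operator convexity of the two-variable function $F$ from the preceding corollary. Your route is slightly more economical, since it bypasses the second passage through the L\"{o}wner-matrix representation; the paper's route has the virtue of keeping the proof structurally parallel to that of Theorem~\ref{concavity of Frechet differential} and of making explicit use of Corollary~2.3.
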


\begin{proof}
With the same assumptions as in the proof of Theorem~\ref{concavity of Frechet differential}, the inverse Frech\'{e}t differential may be expressed as the Hadamard product
\[
\fd f(A)^{-1}(H)=H \circ \bigg(\frac{\lambda_i-\lambda_j}{\lambda_i ^{p+1}-\lambda_j ^{p+1}}\bigg)_{i,j=1}^n\,,
\]
hence
\[
\begin{array}{rl}
\tr H^* \fd f(A)^{-1}H &=\displaystyle \sum\limits_{i,j=1}^n |(He_j,e_i)|^2 \frac{\lambda_i-\lambda_j}{\lambda_i ^{p+1}-\lambda_j ^{p+1}}\\[4ex]
&= \tr H^* F(L_A,R_A)H,
\end{array}
\]
where $L_A$ and $R_A$ are the left and right multiplication operators with respect to $A.$ The statement now follows since $ F $ is operator convex.
\end{proof}

\begin{theorem}
Take $ 0\le p<1 $ and consider the function $ f(t)=t^p. $ The map $A \to \fd f(A)$ is then convex in positive definite matrices.
\end{theorem}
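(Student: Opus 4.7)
The plan is to replicate, with inequality reversed, the two-step scheme used in Theorems~\ref{concave function of two variables} and~\ref{concavity of Frechet differential}. The case $p=0$ is trivial since then $f$ is constant and $\fd f(A)=0$, so focus on $0<p<1$. First I would establish that the divided-difference function
\[
G(s,t)=\left\{\begin{array}{ll}\displaystyle\frac{t^{p}-s^{p}}{t-s} & t\ne s,\\[2ex] \displaystyle p\,t^{p-1} & t=s,\end{array}\right.
\]
extends to an operator convex mapping in pairs of positive definite operators on $\mathcal{H}$, and then transport this operator convexity to the Fr\'{e}chet differential through the left--right multiplication representation, exactly as in the proof of Theorem~\ref{concavity of Frechet differential}.

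For the first step the key identity is
\[
G(s,t)=p\int_{0}^{1}\bigl(\lambda t+(1-\lambda)s\bigr)^{p-1}\,d\lambda\qquad (s\ne t),
\]
obtained by antidifferentiating in $\lambda$, just as in Theorem~\ref{concave function of two variables}. Because $p-1\in[-1,0)$, the power function $t\to t^{p-1}$ is operator convex on $(0,\infty)$ by L\"owner--Heinz, so for each fixed $\lambda$ the mapping $(A,B)\to(\lambda A+(1-\lambda)B)^{p-1}$ is operator convex in pairs of positive definite operators by the same convex-combination argument used in Theorem~\ref{concave function of two variables}, with ``$\le$'' replacing ``$\ge$''. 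Integration in $\lambda$ preserves operator convexity, and a continuity argument at the diagonal yields operator convexity of $G$.

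For the second step, let $A$ be positive definite and diagonalize it with eigenvalues $\lambda_1,\dots,\lambda_n$ in a basis $\{e_i\}$. The Fr\'{e}chet differential of $f(t)=t^{p}$ at $A$ satisfies $\fd f(A)(H)=H\circ L_f(\lambda_1,\dots,\lambda_n)$, so as in Theorem~\ref{concavity of Frechet differential} one computes
\[
\tr H^{*}\fd f(A)H=\sum_{i,j=1}^{n}|(He_j,e_i)|^{2}\,\frac{\lambda_{i}^{p}-\lambda_{j}^{p}}{\lambda_{i}-\lambda_{j}}=\tr H^{*}G(L_A,R_A)H,
\]
where $L_A$ and $R_A$ are the commuting left and right multiplication operators associated with $A$. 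Operator convexity of $G$, combined with Theorem~1.1 of \cite{kn:hansen:2006:3}, then makes $A\to \tr H^{*}\fd f(A)H$ convex for every $H$, which is precisely convexity of $A\to \fd f(A)$.

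The only substantive novelty relative to Theorem~\ref{concavity of Frechet differential}, and the place where the exponent range $p\in[0,1)$ really enters, is the need to invoke operator convexity of $t\to t^{p-1}$ on $(0,\infty)$ in place of the operator concavity of $t\to t^{p}$ used there. Both facts are standard instances of the L\"owner--Heinz results for $t^{\alpha}$; once the correct one is in hand, the remainder is a straightforward translation of the earlier argument.
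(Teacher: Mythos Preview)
Your argument is correct and follows essentially the same route as the paper: it writes the L\"owner divided difference as an integral of a power of an affine combination, invokes operator convexity of the relevant negative power, and then transfers this to $\fd f(A)$ via $L_A,R_A$ and \cite[Theorem~1.1]{kn:hansen:2006:3}. The only cosmetic difference is that the paper parametrizes the exponent as $1-p$ (writing $H(s,t)=\frac{t^{1-p}-s^{1-p}}{t-s}=(1-p)\int_0^1(\lambda t+(1-\lambda)s)^{-p}\,d\lambda$ and using operator convexity of $t\mapsto t^{-p}$), whereas you keep $p$ and use $t\mapsto t^{p-1}$; this is the same argument under the relabeling $p\leftrightarrow 1-p$.
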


\begin{proof}
The idea is quite similar to the construction above. We consider the function
\[\label{operator convex function of two variables}
H(s,t)=\left\{
            \begin{array}{ll}\displaystyle
              \frac{t^{1-p}-s^{1-p}}{t-s}  \hskip 2em &t \neq s \\[2.5ex]

              \displaystyle\frac{1}{1-p}t^{-p}             &t=s  \\
            \end{array}
          \right.
\]
defined in $(0, \infty)\times (0, \infty).$ Since when $s \neq t$ we may write
\[
H(s,t)=(1-p)\int_0^1 (\lambda t +(1-\lambda)s)^{-p}\, d\lambda
\]
and the map
\[
(A,B) \to (\lambda A +(1-\lambda)B)^{-p}
\]
is convex in pairs of positive definite operators for $ \lambda \in [0,1], $ we obtain that $H$ is operator convex. The statement now follows in the same way as in the proof of Theorem~\ref{concavity of Frechet differential}.
\end{proof}

The obtained results may be compared with the concavity statement \cite{hansen:2013:11,tropp:2013:8} of the inverse of the Frech\'{e}t differential mapping associated with the functions $t \to t^p$ for $0 < p <1.$

\section{Operator convex functions of three variables}

Recently, Hansen defined the perspective of a regular operator mapping of several variables \cite{hansen_GM:2014}.
In this section, by applying the notion of perspectives, we exhibit some operator concave (convex) functions of three variables.

Consider for each $ k=1,2,\dots $ the domain
\[
\mathcal{D}_+^k= \{(A_1, \cdots, A_k)|A_1, \cdots, A_k > 0 \}.
\]
of $k$-tuples of positive definite invertible operators $ A_1,\dots,A_k $ acting on a Hilbert space $\mathcal{H}.$

\begin{definition}
Let $F: \mathcal{D}_+^k \to B(\mathcal{H})$ be a regular mapping. The perspective $\mathcal{P}_F$ of $ F $ is the mapping defined in the domain
$\mathcal{D}_+^{k+1}$ by setting
\[
\mathcal{P}_F(A_1,\cdots ,A_k, B)= B^{1/2} F(B^{-1/2}A_1B^{-1/2}, \cdots, B^{-1/2}A_kB^{-1/2})B^{1/2}.
\]
\end{definition}

Hansen \cite{hansen_GM:2014} proved the following convexity theorem.

\begin{theorem}
Let $ \mathcal H $ be an infinite dimensional Hilbert space. The perspective $\mathcal{P_F}$ of a convex regular map $F: \mathcal{D}_+^k \to B(\mathcal{H}) $ is convex and regular.
\end{theorem}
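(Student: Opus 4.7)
The plan is to reduce joint convexity of $\mathcal{P}_F$ to convexity of $F$ itself by an isometric averaging argument in the spirit of Effros's one-variable proof. I would write the arithmetic mean $\lambda B + (1-\lambda)B'$ as a compression $W^{*}(B\oplus B')W$ of a block-diagonal operator by a suitable column isometry $W$, and combine this with the corresponding compressions for each $A_i$, so that a convex combination of inputs becomes a conjugation that can be pushed inside $F$ via a Jensen-type inequality.

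Concretely, set $C = \lambda B + (1-\lambda)B'$ and $D_i = \lambda A_i + (1-\lambda) A_i'$ for $i=1,\dots,k$, and define $W\colon \mathcal{H}\to \mathcal{H}\oplus \mathcal{H}$ by
\[
W=\begin{pmatrix} \sqrt{\lambda}\, B^{1/2} C^{-1/2}\\ \sqrt{1-\lambda}\, B'^{1/2} C^{-1/2}\end{pmatrix},
\]
so that $W^{*}W = C^{-1/2}(\lambda B+(1-\lambda)B')C^{-1/2} = I$. A direct calculation gives
\[
W^{*}\bigl(B^{-1/2}A_i B^{-1/2}\oplus B'^{-1/2}A_i' B'^{-1/2}\bigr)W = C^{-1/2} D_i C^{-1/2},
\]
and the same sandwich $C^{1/2}\,W^{*}(\,\cdot\,)W\,C^{1/2}$ applied to the block diagonal of the $F$-values reproduces the convex combination $\lambda \mathcal{P}_F(A_1,\dots,A_k,B) + (1-\lambda)\mathcal{P}_F(A_1',\dots,A_k',B')$.

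The key analytical input, which I expect to be the principal obstacle, is the Jensen-type operator inequality for convex regular mappings of several variables,
\[
F(W^{*}X_1 W,\dots,W^{*}X_k W) \le W^{*} F(X_1,\dots,X_k)\, W
\]
valid for every isometry $W$. Its derivation from convexity together with the naturality axioms built into the notion of a regular mapping is the technical core; granted it, applying the inequality to $X_i = B^{-1/2}A_i B^{-1/2}\oplus B'^{-1/2}A_i' B'^{-1/2}$ and sandwiching by $C^{1/2}$ yields
\[
\mathcal{P}_F(D_1,\dots,D_k,C) \le \lambda \mathcal{P}_F(A_1,\dots,A_k,B) + (1-\lambda)\mathcal{P}_F(A_1',\dots,A_k',B'),
\]
which is joint convexity.

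Regularity of $\mathcal{P}_F$ should then be a bookkeeping check: the operations $B\mapsto B^{1/2}$ and conjugation by positive invertible operators commute with the naturality transformations defining regular mappings, so $\mathcal{P}_F$ inherits regularity from $F$. The infinite-dimensionality of $\mathcal{H}$ enters only to legitimize the identification $\mathcal{H}\oplus \mathcal{H}\cong\mathcal{H}$, which allows $W$ to be viewed as an operator between copies of a single Hilbert space and thus keeps every quantity inside $B(\mathcal{H})$.
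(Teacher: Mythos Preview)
The paper does not actually supply a proof of this theorem: it is stated as a result of Hansen and cited to \cite{hansen_GM:2014}, so there is no argument in the present paper to compare against. Your proposal is a correct outline and is essentially the approach Hansen uses in that reference --- the isometric dilation $W$ together with the block-diagonal behaviour of regular mappings reduces joint convexity of $\mathcal{P}_F$ to the multivariable Jensen inequality $F(W^{*}X_1W,\dots,W^{*}X_kW)\le W^{*}F(X_1,\dots,X_k)W$, which is indeed the technical core established in \cite{hansen_GM:2014}.
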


We use the above convexity result to obtain:

\begin{theorem}
Let $ 0<p\le 1. $ The three-variable function
\[
F_3(t_1, t_2, t_3)= \left\{
            \begin{array}{ll}\displaystyle
              \frac{t_1^{p+1}-t_2^{p+1}}{t_1-t_2}t_3^{1-p}  \hskip 2em &t_1 \neq t_2 \\[2.5ex]

              \displaystyle\frac{1}{p+1}t_1^{p}t_3^{1-p}                 &t_1 = t_2,  \\
            \end{array}
          \right.
\]
defined in $(0, \infty)\times (0, \infty) \times (0, \infty), $ may be extended to a concave map in positive definite invertible operators. In particular, it is operator concave.
\end{theorem}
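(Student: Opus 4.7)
The plan is to recognize $F_3$ as the perspective of the two-variable concave operator map $G$ from Theorem~\ref{concave function of two variables}, so that the perspective theorem stated just above immediately yields the desired concavity. The key observation is the factorization $F_3(t_1,t_2,t_3) = G(t_1,t_2)\, t_3^{1-p}$ together with the positive $p$-homogeneity $G(\alpha s, \alpha t)=\alpha^p G(s,t)$ for $\alpha>0,$ which is exactly what is required for the perspective construction to produce the third-variable factor $B^{1-p}.$

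First I would compute the perspective $\mathcal{P}_G$ of $G,$ viewed as a regular operator mapping on pairs of positive definite operators, at a commuting triple $(A_1,A_2,B).$ By the functional calculus on a maximal commuting subalgebra and the homogeneity of $G,$ the expression
\[
\mathcal{P}_G(A_1,A_2,B) = B^{1/2} G\bigl(B^{-1/2}A_1 B^{-1/2},\, B^{-1/2}A_2 B^{-1/2}\bigr) B^{1/2}
\]
collapses to $G(A_1,A_2)\, B^{1-p},$ which is precisely the functional calculus of $F_3$ at $(A_1,A_2,B).$

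Next I would apply the perspective theorem to the convex regular mapping $-G$ supplied by Theorem~\ref{concave function of two variables}. This yields convexity of $\mathcal{P}_{-G}=-\mathcal{P}_G$ on $\mathcal{D}_+^3,$ hence concavity of $\mathcal{P}_G,$ and therefore operator concavity of $F_3$ after passing back to commuting triples. The finite-dimensional case covered by the statement follows from the infinite-dimensional perspective theorem by embedding $\mathcal H$ into an infinite-dimensional Hilbert space, since concavity restricts to invariant corners.

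The step I expect to be the main obstacle is a regularity bookkeeping point rather than a conceptual one: the perspective theorem requires $G$ to be not merely operator concave but a \emph{regular} operator mapping in the sense of \cite{hansen_GM:2014}. The extension provided by Theorem~\ref{concave function of two variables} is constructed as an integral of the regular mappings $(A,B)\mapsto(\lambda A+(1-\lambda)B)^p$ over $\lambda\in[0,1],$ and one must verify that regularity is preserved under this integral averaging before the perspective machinery can legitimately be cited.
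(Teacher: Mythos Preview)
Your argument is correct but inverts the order of operations in the paper's proof. The paper first takes the perspective of each one-parameter map $(A,B)\mapsto(\lambda A+(1-\lambda)B)^p$, obtaining for every fixed $\lambda\in[0,1]$ the concave three-variable scalar function $(t_1,t_2,t_3)\mapsto(\lambda t_1+(1-\lambda)t_2)^p\, t_3^{1-p}$, and only then integrates in $\lambda$ to recover $F_3$. You instead integrate first---producing the $G$ of Theorem~\ref{concave function of two variables}---and take the perspective afterwards, using the $p$-homogeneity of $G$ to identify $\mathcal P_G$ with $F_3$ on commuting triples. Both routes yield the same concave extension, since the perspective construction is linear in its argument and therefore commutes with the integral; but the paper's ordering sidesteps exactly the obstacle you flag: regularity of the elementary map $(A,B)\mapsto(\lambda A+(1-\lambda)B)^p$ is immediate, whereas your route needs the extra verification that regularity survives the integral averaging before Theorem~3.2 can be invoked. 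Your approach is more conceptual---a single application of the perspective theorem to an object already on the shelf---while the paper's is slightly more self-contained and avoids that bookkeeping.
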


\begin{proof}
The regular map,
\[
(A,B) \to G(A, B)=(\lambda A +(1-\lambda)B)^p,
\]
is for $ 0<p<1 $ concave in positive definite operators. The perspective mapping
\[
\mathcal{P}_G(A,B,C)=C^{1/2}(\lambda C^{-1/2}A C^{-1/2}+(1-\lambda)C^{-1/2} B C^{-1/2})^p C^{1/2}
\]
of three variables is thus concave. In particular, the function
\[
(t_1, t_2, t_3) \to (\lambda t_1 +(1-\lambda) t_2)^p t_3^{1-p}
\]
is operator concave. Since we may write
\[
 F_3(t_1, t_2, t_3) = (p+1)\int_0^1 (\lambda t_1 +(1-\lambda) t_2)^p t_3^{1-p} \fd \lambda
\]
the statement follows.
\end{proof}

Notice that by setting $t_1 = t_2$ in the above theorem, we recover Lieb's concavity theorem \cite[Theorem 1]{lieb_AM:1973}. The same general idea gives an additional result.

\begin{theorem}
Let $ 0<p<1. $ The three-variable function
\[
F_3(t_1, t_2, t_3)= \left\{
            \begin{array}{ll}\displaystyle
              \frac{t_1^{1-p}-t_2^{1-p}}{t_1-t_2}t_3^{1+p}  \hskip 2em &t_1 \neq t_2 \\[2.5ex]

              \displaystyle\frac{1}{1-p}t_1^{-p}t_3^{1+p}                  &t_1 = t_2,  \\
            \end{array}
          \right.
\]
defined in $(0, \infty)\times (0, \infty) \times (0, \infty),$ may be extended to a convex map in positive definite invertible operators. In particular, it is operator convex.
 \end{theorem}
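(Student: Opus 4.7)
The plan is to mirror the strategy used in the previous theorem, replacing the operator concave map $(A,B) \to (\lambda A + (1-\lambda) B)^p$ with the operator convex map $(A,B) \to (\lambda A + (1-\lambda) B)^{-p}$. For $p \in (0,1)$ the exponent $-p$ lies in the interval $[-1,0]$, so the scalar function $t \to t^{-p}$ is operator convex on $(0,\infty)$. Composing with the affine map $(A,B) \to \lambda A + (1-\lambda) B$ therefore gives, for each fixed $\lambda \in [0,1]$, a regular convex mapping
\[
G_\lambda(A,B) = (\lambda A + (1-\lambda) B)^{-p}
\]
on $\mathcal D_+^2$.

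Next I would apply the perspective construction (the convexity theorem for perspectives of convex regular maps recalled above) to each $G_\lambda$, producing the convex and regular mapping
\[
\mathcal{P}_{G_\lambda}(A,B,C) = C^{1/2}\bigl(\lambda C^{-1/2} A C^{-1/2} + (1-\lambda) C^{-1/2} B C^{-1/2}\bigr)^{-p} C^{1/2}
\]
on $\mathcal D_+^3$. Because integration against a positive measure preserves convexity and regularity, the mapping
\[
\Phi(A,B,C) = (1-p)\int_0^1 \mathcal{P}_{G_\lambda}(A,B,C)\, d\lambda
\]
is convex on $\mathcal D_+^3$.

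Finally I would identify $\Phi$ with the claimed extension of $F_3$. On commuting (in particular scalar) triples the perspective reduces to $(\lambda t_1 + (1-\lambda) t_2)^{-p}\, t_3^{1+p}$, and a direct antiderivative computation gives
\[
(1-p)\int_0^1 (\lambda t_1 + (1-\lambda) t_2)^{-p}\, d\lambda = \frac{t_1^{1-p} - t_2^{1-p}}{t_1 - t_2}\qquad (t_1 \neq t_2),
\]
with the limit $\tfrac{1}{1-p}\, t_1^{-p}$ as $t_2 \to t_1$. Thus $\Phi$ extends $F_3$ to a convex mapping on $\mathcal D_+^3$, and operator convexity of $F_3$ follows at once.

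I do not anticipate a genuine obstacle here: the construction is the exact dual of the preceding theorem, and the only points requiring attention are the operator convexity of $t \to t^{-p}$ on $(0,\infty)$ for $p \in (0,1)$ (which is classical, since the exponent sits in the standard operator convex range) and the routine scalar integral that recovers $F_3$ away from the diagonal.
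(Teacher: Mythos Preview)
Your proposal is correct and follows exactly the paper's own argument: the paper's proof consists of a single sentence pointing out that one repeats the previous theorem's construction with the convex map $(A,B)\to(\lambda A+(1-\lambda)B)^{-p}$ in place of $(\lambda A+(1-\lambda)B)^{p}$, which is precisely what you carry out in detail. One minor slip: the limit of $(1-p)\int_0^1(\lambda t_1+(1-\lambda)t_2)^{-p}\,d\lambda$ on the diagonal is $(1-p)\,t_1^{-p}$, not $\tfrac{1}{1-p}\,t_1^{-p}$ (the paper's stated diagonal value contains the same typo).
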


\begin{proof}
The proof follows the same method as in the above theorem by first noticing that the map
\[
(A,B) \to (\lambda A+(1-\lambda)B)^{-p}
\]
is convex in positive definite operators for each $\lambda \in [0,1].$
\end{proof}

\begin{theorem}  Let $ 0<p\le 1. $
The function of three variables
\[
F_3(t_1, t_2, t_3)= \left\{
            \begin{array}{ll}\displaystyle
              \frac{t_1-t_2}{t_1^{p}-t_2^{p}}t_3^{p}      \hskip 3em &t_1 \neq t_2 \\[2.5ex]

              \displaystyle\frac{1}{p}t_3^{p}t_1^{1-p}                  &t_1 = t_2,  \\
            \end{array}
          \right.
\]
defined in $(0, \infty)\times (0, \infty) \times (0, \infty),$ may be extended to a concave mapping in
positive definite operators acting on a Hilbert space. The function $ F_3 $ is in particular operator concave.
\end{theorem}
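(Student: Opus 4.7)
My plan is to represent $F_3$ as the perspective of an operator concave function of two variables and then invoke Hansen's perspective theorem (Theorem 3.1 in this excerpt), in the spirit of the proofs of Theorems 3.2 and 3.3 but with a different input.

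The first step is to pin down the right two-variable function:
\[
F(x_1,x_2)=\frac{x_1-x_2}{x_1^p-x_2^p}\quad(x_1\neq x_2),\qquad F(x,x)=\tfrac{1}{p}x^{1-p}.
\]
For positive scalars $t_1,t_2,t_3$ a short calculation gives
\[
\mathcal{P}_F(t_1,t_2,t_3)=t_3\,F(t_1/t_3,t_2/t_3)=\frac{t_1-t_2}{t_1^p-t_2^p}\,t_3^p=F_3(t_1,t_2,t_3),
\]
so the functional-calculus version of $F_3$ is the restriction of $\mathcal{P}_F$ to commuting triples. Once $F$ is known to extend to a concave regular operator mapping of two variables, Theorem~3.1 (applied to $-F$) automatically yields that $\mathcal{P}_F$ extends $F_3$ to a concave regular operator mapping of three variables, and the diagonal values $F_3(t,t,t_3)=\tfrac{1}{p}t^{1-p}t_3^p$ are recovered by continuity.

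The crux is therefore to establish operator concavity of $F$. I would invoke the Hansen \cite{hansen:2013:11}--Chen--Tropp \cite{tropp:2013:8} theorem quoted in the introduction: for $0<p\le 1$ the inverse Fr\'echet differential $A\mapsto\fd g(A)^{-1}$ of $g(t)=t^p$ is concave on positive definite operators. Diagonalising $A$ exactly as in the proof of Theorem~2.2 and using the L\"owner matrix expression yields
\[
\tr H^*\fd g(A)^{-1}H=\sum_{i,j}|(He_j,e_i)|^2\,\frac{\lambda_i-\lambda_j}{\lambda_i^p-\lambda_j^p}=\tr H^*F(L_A,R_A)H,
\]
so the cited concavity statement translates via the trace representation of \cite{kn:hansen:2006:3} into operator concavity of $F$ as a two-variable function on pairs of commuting positive definite operators; $F$ is then extended to a concave regular operator mapping by the standard procedure.

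The main obstacle lies in that last step: the cited result is phrased as one-variable concavity of the map $A\mapsto\fd g(A)^{-1}$, whereas Theorem~3.1 requires two-variable operator concavity of $F$, and the translation uses the equivalence of \cite{kn:hansen:2006:3} in the direction converse to the one used in the proof of Theorem~2.2. An alternative in the spirit of Theorems~3.2 and~3.3 would be to write $F_3=\int_0^1\mathcal{P}_{G_\lambda}\,d\lambda$ for operator concave $G_\lambda$, but since
\[
\frac{t_1-t_2}{t_1^p-t_2^p}=\Bigl(p\int_0^1\bigl((1-\lambda)t_2+\lambda t_1\bigr)^{p-1}\,d\lambda\Bigr)^{-1}
\]
appears naturally as the reciprocal of an operator convex integrand, such an approach does not seem to close directly, and the perspective route through Chen--Tropp looks significantly cleaner.
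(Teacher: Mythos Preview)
Your reduction to the two-variable function $F(x_1,x_2)=(x_1-x_2)/(x_1^p-x_2^p)$ followed by a perspective is exactly the right shape, and the paper's $F_2$ is precisely your $F$. But the gap you flag is genuine and fatal for this route: the Hansen/Chen--Tropp theorem gives concavity of the \emph{one-variable} map $A\mapsto\fd g(A)^{-1}$, equivalently of $A\mapsto\tr H^*F(L_A,R_A)H$ for each fixed $H$, and this does \emph{not} imply concavity of $(A,B)\mapsto F(A,B)$ as a regular map of two independent variables, which is what Theorem~3.2 (the perspective theorem) needs as input. The implication in \cite{kn:hansen:2006:3} runs from two-variable operator concavity to one-variable trace concavity; no converse is available here, so the Chen--Tropp result cannot be fed back into the perspective machinery.

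The paper supplies the missing two-variable concavity of $F$ by a constructive argument whose key ingredient you did not locate: the Ando result \cite{ando:1979:laa,hansen:2013:11} that $t\mapsto(\lambda t^p+(1-\lambda))^{1/p}$ is operator concave for $0<p\le 1$. Taking its perspective gives a concave regular two-variable map which on scalars is $(t_1,t_2)\mapsto(\lambda t_1^p+(1-\lambda)t_2^p)^{1/p}$; composing with the operator monotone, operator concave function $t\mapsto t^{1-p}$ preserves concavity; and then
\[
\frac{1}{p}\int_0^1\bigl(\lambda t_1^p+(1-\lambda)t_2^p\bigr)^{(1-p)/p}\,d\lambda=\frac{t_1-t_2}{t_1^p-t_2^p}
\]
realises exactly your $F$ as an average of concave regular maps. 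A final perspective in $t_3$ yields $F_3$. So the integral approach you dismissed does close, but only after the substitution $s=t^p$ implicit in Ando's theorem; the representation of $F$ as the reciprocal of $p\int_0^1(\lambda t_1+(1-\lambda)t_2)^{p-1}\,d\lambda$ is indeed the wrong one to start from.
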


\begin{proof}
For each $0 < p \le 1$ we consider the regular operator mapping
\[
A \to F_1(A)=(\lambda A^p+(1-\lambda) I)^{1/p}
\]
defined in positive definite operators acting on a Hilbert space. We obtain that $F_1$ is concave by using that the function
\[
t \to (\lambda t^p+(1-\lambda))^{1/p}, \quad 0 < p \le 1
\]
is operator concave, see \cite{ando:1979:laa,hansen:2013:11}. The perspective,
\[
\mathcal{P}_{F_1}(A, B)=B^{1/2}\bigl(\lambda (B^{-1/2}A B^{-1/2})^p + (1-\lambda) I\bigr)^{1/p}B^{1/2},
\]
is thus concave in positive definite invertible operators. Since the function $t \to t^{1-p}$  is operator monotone and operator concave for $0 < p \le 1, $ it follows that the regular mapping
\[
G_2(A,B)=\mathcal{P}_{F_1}^{1-p}(A, B)
\]
is concave in positive definite invertible operators. In particular, the integral
\[
F_2(A,B):=\frac{1}{p}\int_0^1 G_2(A,B) \,d\lambda
\]
is concave. Taking the perspective once more we obtain that
\[
\begin{array}{l}
\mathcal{P}_{F_2}(A,B,C)=C^{1/2} F_2(C^{-1/2}AC^{-1/2},C^{-1/2}BC^{-1/2}) C^{1/2}  \\[3ex]
= \displaystyle\frac{1}{p}C^{1/2} \int_0^1   \bigg\{(C^{-1/2}BC^{-1/2})^{1/2}    \bigg[\lambda \big((C^{-1/2}BC^{-1/2})^{-1/2}C^{-1/2}AC^{-1/2} \times \\[3ex]
\quad (C^{-1/2}BC^{-1/2})^{-1/2}\big)^p
+(1-\lambda)I  \bigg]^{1/p} (C^{-1/2}BC^{-1/2})^{1/2} \bigg\}^{1-p}\, d\lambda\, C^{1/2}
\end{array}
\]
is concave in positive definite invertible operators. Since for positive numbers,
\[
\mathcal{P}_{F_2}(t_1,t_2,t_3)=F_3(t_1,t_2,t_3),
\]
the statements follow.
\end{proof}

\section*{Acknowledgements}
The author would like to thank Professor Frank Hansen and Professor Lan Shu for valuable suggestions and discussions. This work is supported by CSC scholarship No.201306070036 and Excellent Doctoral Students Academic Support Program of UESTC No.YBXSZC20131045.

{\small

\bibliographystyle{plain}
%\bibliography{Mathharv}

\noindent Zhihua Zhang: School of Mathematical Sciences, University of Electronic Science and Technology of China, P. R. China, and Mathematical Institute, Tohoku University, Japan.  Email: zhihuamath@aliyun.com   \\[1ex]

\end{document}